
\NeedsTeXFormat{LaTeX2e}

\documentclass{lms}
\usepackage{amssymb}
\usepackage{latexsym}
\usepackage{amsfonts}
\usepackage{amsmath}

\newtheorem{theorem}{Theorem}[section] 
\newtheorem{lemma}[theorem]{Lemma}     
\newtheorem{corollary}[theorem]{Corollary}
\newtheorem{proposition}[theorem]{Proposition}

\newnumbered{assertion}{Assertion}    
\newnumbered{conjecture}{Conjecture}  
\newnumbered{definition}{Definition}
\newnumbered{hypothesis}{Hypothesis}
\newnumbered{remark}{Remark}
\newnumbered{note}{Note}
\newnumbered{observation}{Observation}
\newnumbered{problem}{Problem}
\newnumbered{question}{Question}
\newnumbered{algorithm}{Algorithm}
\newnumbered{example}{Example}
\newunnumbered{notation}{Notation} 


\DeclareMathOperator{\base}{\mathfrak{B}}
\DeclareMathOperator{\topg}{\mathfrak{L}}

\DeclareMathOperator{\AGL}{AGL}

\DeclareMathOperator{\supp}{supp}

\DeclareMathOperator{\Perm}{Perm}

\DeclareMathOperator{\NR}{\mathcal{N}}
\DeclareMathOperator{\G}{\mathcal{G}}
\DeclareMathOperator{\reed}{\mathcal{R}}
\DeclareMathOperator{\cd}{\mathcal{C}}
\DeclareMathOperator{\PN}{\mathcal{PN}}
\DeclareMathOperator{\z}{{\bf{0}}}
\DeclareMathOperator{\1}{{\bf{1}}}
\DeclareMathOperator{\wt}{wt}
\DeclareMathOperator{\Aut}{Aut}
\DeclareMathOperator{\GL}{GL}

\def\cprime{$'$}

\newcommand{\bs}{\boldsymbol}
\newcommand{\Z}{\mathbb Z}
\newcommand{\F}{\mathbb F}


\title[Characterisations of the Nordstrom-Robinson codes]
      {New Characterisations of the\\ Nordstrom-Robinson codes}
\author{Neil I. Gillespie and Cheryl E. Praeger}

\classno{94B05, 05E18 (primary), 20B25, 05C25 (secondary)}

\extraline{This research was supported by the Australian Research
Council Federation Fellowship FF0776186 of the second author.} 

\begin{document}
\maketitle

\begin{abstract}
In his doctoral thesis, Snover proved that any binary $(m,256,\delta)$
code is equivalent to the Nordstrom-Robinson code or the punctured
Nordstrom-Robinson code for $(m,\delta)=(16,6)$ or $(15,5)$
respectively.  We prove that these codes are also characterised as
\emph{completely regular} binary codes with $(m,\delta)=(16,6)$ or $(15,5)$,
and moreover, that they are \emph{completely transitive}. 
Also, it is known that completely transitive codes are necessarily completely regular, but 
whether the converse holds has up to now been an open question.  We answer this by 
proving that certain completely regular codes are not completely transitive, namely, the (punctured) Preparata
codes other than the (punctured) Nordstrom-Robinson code.
\end{abstract}

\section{Introduction}

In \cite{hammons}, Hammons et al.~proved that certain interesting non-linear codes can be efficiently described
as the image under the Grey map of $\Z_4$-linear codes (see Section \ref{s:z4linear} for appropriate definitions).  
Their result has led to a significant research effort into 
$\Z_4$-linear codes; for various classifications and constructions, see, for example, \cite{borges1,calderbankkantor,Calderbank1,oneweight,fields1,fren1,krotov1,Pless1,solov1};
for interesting applications to steganography, see \cite{brier,jouhari}; for connections to unimodular lattices, respectively to semifield planes, see \cite{bonn1}, and \cite{semifields1,semifields2}; for a database of $\Z_4$-linear codes, see \cite{database}, and references within.

In their paper, Hammons et al. also gave an explanation to one of the outstanding problems in coding theory, that the
weight enumerators of the non-linear Kerdock codes and the Preparata codes satisfy the MacWilliams identities.
The first member of both of these families is the well known Nordstrom-Robinson code $\mathcal{N}$, which is a non-linear
$(16,256,6)$ binary code with several interesting properties. It is optimal, in the sense that it is the largest possible binary code of 
length $16$ with minimum distance $6$, and it is twice as large as any linear binary code with the same length and minimum distance.
Moreover, Snover \cite{snover} proved that any binary $(16,256,6)$ code is
equivalent to the Nordstrom-Robinson code. Analogous properties also hold for the punctured Nordstrom-Robinson code, a non-linear $(15,256,5)$ code.
In this paper, we prove that the Nordstrom-Robinson codes have other exceptional properties. First we prove 
that the codes are \emph{completely
  transitive}, and hence \emph{completely regular} (see 
Definition \ref{defs}). Then we show that binary completely regular codes with the same length and minimum distance
parameters are equivalent to the Nordstrom-Robinson codes.

\begin{theorem}\label{mainthm} Any binary completely regular code of length $m$ with minimum distance $\delta$
is equivalent to the Nordstrom-Robinson code, respectively the punctured Nordstrom-Robinson code, 
if $(m,\delta)=(16,6)$ or $(15,5)$.  Moreover, such a code is completely transitive.
\end{theorem}

It is known that completely transitive codes are
  necessarily completely regular \cite{giupra}.  A consequence
  of Theorem \ref{mainthm} is that the converse holds for binary codes with
  $(m,\delta)=(16,6)$ or $(15,5)$.  This is
  similar to a result in \cite{paphad} in which the authors proved
  that a binary completely regular code with $(m,\delta)=(12,6)$ or
  $(11,5)$ is unique up to equivalence, and that such codes are
  completely transitive. We demonstrate that the converse does not hold for any other code in an infinite family containing these two codes.

As mentioned above, the Nordstrom-Robinson code of length $16$ is the first member of a family of completely regular codes
called the Preparata codes (see \cite[Section 7.4.3]{vanlint} for a nice definition of the Preparata codes). 
It turns out that no other Preparata code is completely transitive, and similarly, no other punctured Preparata code apart from the punctured 
Nordstrom-Robinson code is completely transitive.
\begin{theorem}\label{main2}
The (punctured) Nordstrom-Robinson code is the only member of the (punctured) Preparata codes that is completely transitive.
In particular, other than the (punctured) Nordstrom-Robinson code, the (punctured) Preparata codes are completely regular but not completely transitive.
\end{theorem}
As far as the authors are aware, these are the first examples of completely regular codes shown not to be completely transitive.

In Section \ref{prelim}, we introduce the necessary definitions and
preliminary results.  Then in Section \ref{ctr} we prove that the
Nordstrom-Robinson code and the punctured Nordstrom-Robinson code are completely transitive, and we prove Theorem \ref{main2}.
We prove Theorem \ref{mainthm} in Section \ref{sec:proofmain1}.
In the final section we consider the natural question of 
whether the complete transitivity of the Nordstrom-Robinson code could be determined from the $\Z_4$-linear structure of 
its $\Z_4$-representation, the \emph{Octacode}. We give a discussion 
which suggests that the binary representation is the correct setting to prove that it is completely transitive.

\section{Definitions and Preliminaries}\label{prelim}

The \emph{binary Hamming graph $\Gamma=H(m,2)$} has vertex set 
$V(\Gamma)=\F_2^m$, the set of $m$-tuples with entries from the 
field $\F_2=\{0,1\}$, and an edge exists between two vertices if and
only if they differ in precisely one entry. The \emph{Hamming distance $d(\bs{\alpha},\bs{\beta})$} between $\bs{\alpha}, \bs{\beta}\in\F_2^m$ is the number
of entries in which the two vertices differ. 
Let $M=\{1,\ldots,m\}$, and view $M$ as the set of vertex entries of
$\Gamma$.  For $\bs{\alpha}\in\F_2^m$, the \emph{support of $\bs{\alpha}$} is
the set $\supp(\bs{\alpha})=\{i\in M\,:\,\alpha_i\neq 0\}$, and the
\emph{weight of $\bs{\alpha}$} is
$\wt(\bs{\alpha})=|\supp(\bs{\alpha})|$. 

A \emph{code} $C$ in $\Gamma$ is a non-empty subset of $V(\Gamma)$, and a \emph{codeword} is an element of $C$. 
The \emph{minimum distance, $\delta$, of C} is the smallest distance between distinct codewords 
of $C$. For any vertex $\bs{\gamma}\in\Gamma$, we define 
the \emph{distance of $\bs{\gamma}$ from $C$} to be
$$d(\bs{\gamma},C)=\min\{d(\bs{\gamma},\bs{\beta})\,|\,\bs{\beta}\in C\},$$
and the \emph{covering radius of $C$} to be
$$\rho=\max_{\gamma\in V(\Gamma)} d(\gamma,C).$$
We let $C_i$ denote the set of vertices that
are distance $i$ from $C$.  It follows that
$\{C=C_0,C_1,\ldots,C_\rho\}$ forms a partition of $V(\Gamma)$, called
the \emph{distance partition of $C$}. The \emph{distance distribution
  of $C$} is the $(m+1)$-tuple $a(C)=(a_0,\ldots,a_m)$ 
where $$a_i=\frac{|\{(\bs{\alpha},\bs{\beta})\in 
  C^2\,:\,d(\bs{\alpha},\bs{\beta})=i\}|}{|C|}.$$   
We observe that $a_i\geq 0$ for all $i$ and $a_0=1$.  Moreover,
$a_i=0$ for $1\leq i\leq \delta-1$ and $|C|=\sum_{i=0}^ma_i$.
In the Hamming graph, the \emph{MacWilliams transform} of the distance distribution of $C$, $a(C)$, is the $(m+1)$-tuple 
$a'(C)=(a_0',\ldots,a_m')$
where \begin{equation}\label{kracheqn}a'_k:=\sum_{i=0}^ma_iK_k(i)\end{equation}
with \begin{equation*}K_k(x):=\sum_{j=0}^k(-1)^j\binom{x}{j}\binom{m-x}{k-j}.\end{equation*}
It follows from \cite[Lemma 5.3.3]{vanlint}
that $a'_k\geq 0$ for $k\in\{0,1,\ldots,m\}$. 

The automorphism
group $\Aut(\Gamma)$ of the binary Hamming graph is semi-direct product $\base\rtimes\topg$ where $\base\cong S_2^m$ and
$\topg\cong S_m$, see \cite[Theorem 9.2.1]{distreg}.    Let
$g=(g_1,\ldots, g_m)\in \base$, $\sigma\in\topg$ and
$\bs{\alpha}=(\alpha_1,\ldots,\alpha_m)\in V(\Gamma)$.  Then $g\sigma$ acts
on $\bs{\alpha}$ in the following
way: 
\begin{equation}\label{eq:hamact}\bs{\alpha}^{g\sigma}=(\alpha_{1{\sigma^{-1}}}^{g_{1{\sigma^{-1}}}},\ldots,\alpha_{m{\sigma^{-1}}}^{g_{m{\sigma^{-1}}}}). 
\end{equation} Since the
base group $\base\cong S_2^m$ of $\Aut(\Gamma)$ acts regularly on
$V(\Gamma)$, we may identify $\base$ with the group of
translations of $\F^m_{2}$, and $\Aut(\Gamma)$ with a subgroup of the
affine group $\AGL(m,2)$.  More precisely $\base$ consists of the
translations $g_{\bs{\beta}}$, where
$\bs{\alpha}^{g_{\bs{\beta}}}=\bs{\alpha}+\bs{\beta}$ for 
$\bs{\alpha},\bs{\beta}\in \F_2^m$, and if $\z$ is the zero vector, then
$\Aut(\Gamma)=\base\rtimes \Aut(\Gamma)_{\mathbf{0}}$ where
$\Aut(\Gamma)_{\mathbf{0}}$ (the stabiliser of $\z$ in
$\Aut(\Gamma)$) is the group of permutation matrices in $\GL(m,2)$. 
The \emph{automorphism group of a code $C$}, $\Aut(C)$, is
the setwise stabiliser in $\Aut(\Gamma)$ of $C$. We let $\Perm(C)$
denote the group of permutation matrices that fix $C$ setwise.
We say two codes $C$ and $C'$ in $\Gamma$ are \emph{equivalent} if
there exists $x\in\Aut(\Gamma)$ such that $C^x=C'$.  

\begin{remark}  In traditional coding theory, only weight preserving
  automorphisms of a code are considered, and so in the binary case,
  $\Perm(C)$ is defined as the automorphism group of a code.
  Consequently, established results about automorphism groups of
  certain codes refer to $\Perm(C)$, not $\Aut(C)$.  However, if
  $\z\in C$ we note that $\Aut(C)_{\z}$ is equal to $\Perm(C)$.       
\end{remark}

\begin{definition}\label{defs}  Let $C$ be code in $\Gamma$ with
  distance partition $\{C,C_1,\ldots,C_\rho\}$, and $\bs{\gamma}\in C_i$.
  We say $C$ is \emph{completely regular} if $|\Gamma_{k}(\bs{\gamma})\cap
  C|$ depends only on $i$ and $k$, and not on the choice of $\bs{\gamma}\in
  C_i$.  If there exists $X\leq \Aut(\Gamma)$ such that each $C_i$
  is an $X$-orbit, then we say $C$ is \emph{$X$-completely
    transitive}, or simply \emph{completely transitive}.  
\end{definition}

\begin{lemma}\cite{neum}\label{lem:rcr} Let $C$ be a completely regular code in $\Gamma$ with distance
partition $\{C,C_1,\ldots,C_\rho\}$.  Then $C_\rho$ is a completely regular code with distance partition $\{C_\rho,C_{\rho-1},\ldots,C\}$.
\end{lemma}

If a code $C$ is a subspace of $\F_2^m$ with dimension $k$, we say
$C$ is a \emph{linear} $[m,k,\delta]$ code.  If $C$ is not a linear
code we say $C$ is a $(m,|C|,\delta)$ code, where $|C|$ denotes the
cardinality of $C$. A code is \emph{antipodal} if $\bs{\alpha}+\bs{1}\in C$ for all $\bs{\alpha}\in C$, where $\bs{1}=(1,\ldots,1)$, otherwise we say $C$ is
\emph{non-antipodal}.  

Let $\bs{\alpha}$, $\bs{\beta}$ be two 
vertices in $\F_2^m$.  Then we say $\bs{\alpha}$ is \emph{covered} by
$\bs{\beta}$ if for each non-zero component $\alpha_i$ of
$\bs{\alpha}$ it holds that $\alpha_i=\beta_i$. Let $\mathcal{D}$ be a set of vertices of weight $k$ in 
$\Gamma$. Then we say $\mathcal{D}$ is a \emph{$t$-$(m,k,\lambda)$ design} if for every vertex $\bs{\nu}$ of weight 
$t$, there exist exactly $\lambda$ vertices of $\mathcal{D}$ that cover $\bs{\nu}$. This definition coincides with the usual definition of a
$t$-$(m,k,\lambda)$ design (see \cite{camvan}, for example), in the sense that
the rows of the incidence matrix of a $t$-design are the elements of $\mathcal{D}$. 

We let $b$ denote the size of $\mathcal{D}$. If
$\mathcal{D}$ is a $t$-design, then it is also an $j-(m,k,\lambda_j)$ 
design for $0\leq j\leq t-1$ 
\cite[Corollary 1.6]{camvan}
where \begin{equation}\label{arith1}\lambda_j\binom{k-j}{t-j}=\lambda\binom{m-j}{t-j}.\end{equation}
Using this fact we can deduce
that \begin{equation}\label{arith2}\binom{m}{j}\lambda_j=b\binom{k}{j}.\end{equation}
For further concepts and definitions about $t$-designs see
\cite{camvan}.

Let $p\in M=\{1,\ldots m\}$, and $C$ be a code in $\F_2^m$.  By
deleting the same coordinate $p$ from each codeword of $C$, we
obtain a code in $\F_2^{m-1}$, which we call the \emph{punctured
code of $C$ with respect to $p$}. We can also think of this as the projection of $C$
onto $J=M\backslash\{p\}$. Indeed, for a general $J=\{i_1,\ldots,i_k\}\subseteq M$,
let $\pi_J:\F_2^m\longrightarrow\F_2^{|J|}$ denote the projection onto the entries in $J$, and 
define $\pi_J(C)=\{\pi_J(\bs{\alpha})\,:\,\bs{\alpha}\in C\}$. When we project we would like to have some
group information available to us.  We have an induced action of
$\Aut(\Gamma)_J=\{g\sigma\in \Aut(\Gamma)\,:\,J^\sigma=J\}$ as
follows: for $x\in \Aut(\Gamma)_J$, we define
\begin{equation}\label{chi}\begin{array}{c c c c}
\chi(x):&\F_2^{|J|}&\longrightarrow&\F_2^{|J|}\\   
&\pi_J(\bs{\alpha})&\longmapsto&\pi_J(\bs{\alpha}^x),\\
\end{array}\end{equation} and observe that
$\ker\chi=\{(g_1,\ldots,g_m)\sigma\in\Aut(\Gamma)_J\,:\,j^\sigma=j\textnormal{ and }g_j=1\textnormal{ for }j\in J\}.$    

The following is a consequence of a result proved by Van Tilborg
\cite[Thm. 2.4.7]{vantil}. (Earlier this was proved for uniformly packed codes in the narrow sense \cite{semzin71}.)  For a code $C$ and a positive integer
$k$ we denote by $C(k)$ the set of weight $k$ codewords of $C$.  

\begin{theorem}\label{des1} Let $C$ be a completely regular code in
  $\Gamma$ that contains the zero vertex.  Then for each $k$ with
  $\delta\leq k\leq m$ and $C(k)\neq\emptyset$, it holds that $C(k)$
  forms a $t$-design with $t=\lfloor\frac{\delta}{2}\rfloor$. 
\end{theorem}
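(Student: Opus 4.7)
The plan is to reduce the $t$-design property to the constancy of certain intersection counts that complete regularity provides. Set $t=\lfloor\delta/2\rfloor$. The first step, which is the main preliminary, is to show that every $\bs{\nu}\in\F_2^m$ of weight $t$ lies in $C_t$. Since $\z\in C$ and $d(\bs{\nu},\z)=t$, we have $d(\bs{\nu},C)\leq t$. Conversely, if some $\bs{\beta}\in C$ with $\bs{\beta}\neq\z$ satisfied $d(\bs{\nu},\bs{\beta})<t$, then by the triangle inequality
\[
d(\z,\bs{\beta})\leq d(\z,\bs{\nu})+d(\bs{\nu},\bs{\beta})<2t\leq\delta,
\]
contradicting the minimum distance of $C$, since $\wt(\bs{\beta})=d(\z,\bs{\beta})\geq\delta$. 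Therefore $d(\bs{\nu},C)=t$ and $\bs{\nu}\in C_t$, so complete regularity guarantees that the intersection numbers $|\Gamma_{m,j}(\bs{\nu})\cap C|$ depend only on $t$ and $j$, not on the choice of $\bs{\nu}$.

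Next I would translate the $t$-design condition into a statement about distances. If $\wt(\bs{\beta})=k$ and $\wt(\bs{\nu})=t$, then $\bs{\beta}$ covers $\bs{\nu}$, meaning $\supp(\bs{\nu})\subseteq\supp(\bs{\beta})$, precisely when $|\supp(\bs{\nu})\cap\supp(\bs{\beta})|=t$, and the identity $d(\bs{\nu},\bs{\beta})=\wt(\bs{\nu})+\wt(\bs{\beta})-2|\supp(\bs{\nu})\cap\supp(\bs{\beta})|$ shows this is equivalent to $d(\bs{\nu},\bs{\beta})=k-t$. Writing $\lambda(\bs{\nu})$ for the number of weight-$k$ codewords covering $\bs{\nu}$, the target is then to show that $\lambda(\bs{\nu})$ is constant as $\bs{\nu}$ ranges over weight-$t$ vertices.

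The main obstacle is that $|\Gamma_{m,k-t}(\bs{\nu})\cap C|$ also counts codewords of weights strictly less than $k$ (with weight ranging in $2t\leq w<k$ of the same parity as $k$) that happen to lie at distance $k-t$ from $\bs{\nu}$, so $\lambda(\bs{\nu})$ is not immediately one of the constant intersection numbers. This is precisely the extraction problem that Van Tilborg's \cite[Theorem 2.4.7]{vantil} resolves, using the Krawtchouk/eigenvalue structure of the distance partition to iteratively strip off the lower-weight contributions in a manner that remains independent of $\bs{\nu}$. I would therefore complete the proof by invoking that theorem, whose hypothesis reduces exactly to the observation that $\bs{\nu}\in C_t$ together with the complete regularity of $C$; the conclusion is that $\lambda(\bs{\nu})$ is a function of $(t,k)$ alone, so $C(k)$ forms a $2$-ary $t$-design.
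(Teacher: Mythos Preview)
The paper does not give a proof of this statement at all; it merely records that the result ``is a consequence of a result proved by Van Tilborg \cite[Theorem 2.4.7]{vantil}'' and moves on. Your proposal ultimately does the same thing---after some preparatory work you invoke Van Tilborg's theorem for the decisive step---so the two approaches are essentially identical in substance.

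Your preliminary observations (that every weight-$t$ vertex lies in $C_t$, and that covering is equivalent to $d(\bs{\nu},\bs{\beta})=k-t$) are correct and nicely motivate why complete regularity should be relevant, but they are not strictly needed once you appeal to \cite[Theorem 2.4.7]{vantil}: that theorem already encapsulates the full passage from complete regularity to the $t$-design property of each nonempty $C(k)$. In other words, your steps (1)--(3) are part of the internal machinery of Van Tilborg's proof rather than a genuine reduction that simplifies the citation. If you wanted to make the argument self-contained, the ``stripping off lower-weight contributions'' you allude to can be done by an induction on $k$ combined with an inclusion--exclusion over the intersection sizes $|\supp(\bs{\nu})\cap\supp(\bs{\beta})|$, using only that the numbers $|\Gamma_{m,j}(\bs{\nu})\cap C|$ are constant for $\bs{\nu}\in C_t$; but as written you have not supplied that, so your proof is no more complete than the paper's one-line citation.
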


\subsection{The Nordstrom-Robinson code $\NR$}  
The Nordstrom-Robinson code was discovered by Nordstrom and Robinson in \cite{nord}, and independently by Semakov and Zinoviev  in \cite{semzin69}. 
It is a binary, non-linear, $(16,256,6)$ code, and Snover proved that all binary $(16,256,6)$ codes are equivalent \cite{snover}. 
So if one desired, one can 
take the definition of the code to be any $(16,256,6)$ code. However, 
in order for us to prove the complete transitivity of the Nordstrom-Robinson code,
we require the following description due to Goethals \cite{goethals}.

Let $\G$ be the $[24,12,8]$ extended binary Golay code (defined, for example, in \cite[p.131]{camvan}), chosen so  
that $\bs{\bar{\gamma}}=(1^8,0^{16})\in\G$.  Let $J^*=\{1,\ldots,8\}$ and $J=M\backslash J^*$.
We define the following subcode of  
$\G$: $$\cd=\{\bs{\bar{\alpha}}\in\G\,:\,\supp(\bs{\bar{\alpha}})\cap 
J^*=\emptyset \}.$$ For $1\leq i\leq 7$, let
$\bs{\bar{\alpha}}_i$ be a codeword in $\G$ with
$\supp({\bs{\bar{\alpha}_i}})\cap J^*=\{i,8\}$ (such codewords exist
in $\G$, see \cite[p.73]{macwill}), and let $\cd^i$   
be the coset $\bs{\bar{\alpha}}_i+\cd$. It follows that $\cd^i$
consists of all the codewords $\bs{\bar{\alpha}}\in\G$ such that
$\supp(\bs{\bar{\alpha}})\cap J^*=\{i,8\}$.

\begin{definition}  Let $\mathcal{A}=\cup_{i=0}^7 \cd^i$, where
  $\cd^0=\cd$.  The \emph{Nordstrom-Robinson code} $\NR$ is defined to
  be $\mathcal{A}$ with the first $8$ coordinates deleted, that is, the projection code of $\mathcal{A}$ onto $J$.
\end{definition}

Berlekamp proved that $\NR$ is a binary $(16,256,6)$ code, and that
$\Aut(\NR)_{\z}=\Perm(\NR)=2^4:A_7$ acting $3$-transitively on $16$ points
\cite{berl}, where $\z$ is the zero codeword in $\NR$. We also require the following. It is also known that $\Perm(\G)\cong M_{24}$ \cite[Ch. 20]{macwill}, and hence
$\Aut(\G)=T_{\G}\rtimes\Perm(\G)$ where $T_{\G}$ is the group of
translations generated by $\G$ \cite{giupra}.  Furthermore, by
\cite[p.96]{atlas} \begin{equation*}H:=\Perm(\G)_{\bs{\bar{\gamma}}}\cong\AGL(4,2)\cong  
  2^4:A_8.\end{equation*} It follows that $H$ has an induced action on $J^*$ that is permutationally
isomorphic to $A_8$, and also a faithful action on $J$. Moreover, $H\leq\Perm(\cd)$.

Semakov and Zinoviev \cite{semzin69} showed that the Nordstrom-Robinson code can partitioned into the union of $8$ cosets
of the Reed-Muller code $R(1,4)$. Indeed, this can be seen in the above description. 
Let $\mathcal{R}$ be the subcode of $\NR$ equal to the
projection code of $\cd$ onto $J$, and for $i=1,\ldots,7$ let $\reed^i$ be
the projection code of $\cd^i$ onto $J$, so
$\NR=\bigcup_{i=0}^7\reed^i$ where $\reed=\reed^0$.  The code
$\mathcal{R}$ is the linear $[16,5,8]$ Reed Muller code $R(1,4)$ \cite[p.74]{macwill}, and it follows, for each
$i=1,\ldots,7$, that $\reed^i$ is a coset of $\reed$.

\subsection{On the Complete regularity of the (Punctured) Preparata codes}\label{sec:comreg}

The Nordstrom-Robinson code is the first member the \emph{Preparata codes} \cite{preparata}, an infinite family of non-linear binary codes. For each odd $k\geq 3$, the Preparata code $\mathcal{P}(k)$ has length $2^{k+1}$, contains $2^{k+1}-2(k+1)$ codewords and has minimum distance $6$ (see for example, \cite[Section 7.4.3]{vanlint}). The code $\mathcal{P}(3)$ is equivalent to the Nordstrom-Robinson code $\mathcal{N}$ of length $16$. 
It is well known that $\mathcal{P}(k)$ and the punctured Preparata code $\mathcal{P}(k)^*$, 
are both completely regular for all odd $k\geq 3$ (see, for example, \cite[Ex. 6.3]{sole}). 

\begin{remark}\label{rem:comreg} The complete regularity of the (punctured) Preparata codes can be deduced from earlier work of Semakov et al.~\cite{semzin71}.
They proved that the punctured Preparata codes are \emph{uniformly packed (in the narrow sense)} with covering radius $3$, which also implies 
that the Preparata codes have covering radius $4$.
They then showed that a uniformly packed code (in the narrow sense) $C$ with covering radius $\rho$ has
exactly $\rho+1$ different weight distributions amongst all translates of $C$  \cite[Thm. 4]{semzin71}, which is an equivalent definition of a completely regular code. 
A similar result for the Preparata codes can also be deduced from \cite[Thm. 5]{semzin71}. Alternatively, Bassalygo and Zinoviev proved that the Preparata codes are \emph{uniformly packed (in the wide sense)} \cite{baszin77}, and from this one can easily deduce that they are completely regular (see, for example, \cite[Lemma 2.3]{paphad}).
\end{remark}

\section{Complete transitivity of the Nordstrom-Robinson codes}\label{ctr}

Let $\Gamma=H(16,2)$, and recall that $\Aut(\NR)$ is the stabiliser of $\NR$ in $\Aut(\Gamma)$.
The following homomorphism defines an action of $\Aut(\NR)$ on
$M=\{1,\ldots,16\}$.  \begin{equation}\label{mu}\begin{array}{c c c c}
\mu:&\Aut(\NR)&\longrightarrow&S_{16}\\   
&g\sigma&\longmapsto&\sigma\\
\end{array}\end{equation} 
We let $K=\Aut(\NR)\cap\base$ denote the kernel of the map $\mu$. We note that since $\NR$ is the union of
cosets of $\reed$, the group $T_{\reed}$ of translations generated by $\reed$ is a subgroup of $K$.  

\begin{theorem}\label{nrctr} $\NR$ is completely transitive.   
\end{theorem}

\begin{proof}  We first prove that for each $\bs{\beta}\in\NR$, there
  exists an $x\in\Aut(\NR)$ such that $\bs{\beta}^x=\z$, and hence $\Aut(\NR)$
  acts transitively on $\NR$.  Let $\bs{\beta}\in\NR$.  If
  $\bs{\beta}\in\reed$ then as $T_{\reed}\leq K$, $g_{\bs{\beta}}\in\Aut(\NR)$, and it follows that
  $\bs{\beta}^{g_{\bs{\beta}}}=\bs{\beta}+\bs{\beta}={\bf{0}}$.  Now
  suppose that $\bs{\beta}\in\NR\backslash\reed$, and let $\bs{\bar{\beta}}$ be the codeword in $\mathcal{A}$
  that projects onto $\bs{\beta}$. Then there
  exists a unique $i\in\{1,\ldots,7\}$ such that
  $\bs{\bar{\beta}}\in\cd^i=\bs{\bar{\alpha}}_i+\cd$.  
  Let $g$ be the translation of $\F_2^{24}$ generated by
  $\bs{\bar{\alpha}}_i$, let $\sigma\in H$ such that $i^\sigma=8$,
  and let $x=g\sigma\in\Aut(\G)$. We claim that $\chi(x)\in\Aut(\NR)$, where $\chi$ is as in (\ref{chi}).  

  Since $\sigma\in\Perm(\cd)$ it follows that
  $(\mathcal{C}^i)^x=(\bs{\bar{\alpha}}_i+\bs{\bar{\alpha}}_i+\cd)^\sigma=\cd^\sigma=\cd$.
  In particular, $\bs{\bar{\beta}}^x\in\cd$.  Furthermore,
  $\cd^x=(\bs{\bar{\alpha}}_i+\cd)^\sigma=\bar{\bs{\alpha}}_i^\sigma+\cd$. 
  Now, because $\supp(\bs{\bar{\alpha}}_i^\sigma)\cap
  J^*=\{i^\sigma,8^\sigma\}=\{8,8^\sigma\}$ and $\sigma$ stabilises
  $J^*$, it follows that
  $\bs{\bar{\alpha}}_i^\sigma\in\bs{\bar{\alpha}}_{8^\sigma}+\cd$ and so
  $\cd^x=\bs{\bar{\alpha}}_{8^\sigma}+\cd=\mathcal{C}^{k}$, where $k=8^\sigma$.  Now, for $j\neq i$ or
  $0$, consider $\cd^j=\bs{\bar{\alpha}}_j+\cd$.  Then
  $(\bs{\bar{\alpha}}_j+C)^x=(\bs{\bar{\alpha}}_j+\bs{\bar{\alpha}}_i+\cd)^\sigma=(\bs{\bar{\alpha}}_j+\bs{\bar{\alpha}}_i)^\sigma+\cd$,   
  because $\sigma\in\Perm(\cd)$.  It follows that
  $\supp(\bs{\bar{\alpha}}_j+\bs{\bar{\alpha}}_i)\cap J^*=\{j,i\}$,
  and so $\supp((\bs{\bar{\alpha}}_j+\bs{\bar{\alpha}}_i)^\sigma)\cap
  J^*=\{j^\sigma,i^\sigma\}=\{j^\sigma,8\}$.  Consequently
  $(\bs{\bar{\alpha}}_j+\bs{\bar{\alpha}}_i)^\sigma\in
  \bs{\bar{\alpha}}_{j^\sigma}+\cd$, and so 
  $(\mathcal{C}^j)^x=\bs{\bar{\alpha}}_{j^\sigma}+\cd=\cd^{\ell}$, where $\ell=j^\sigma$.
  Hence $x$ fixes setwise $\mathcal{A}$.  Because $\NR=\pi_J(\mathcal{A})$ we deduce
  that $\chi(x)\in\Aut(\NR)$.   

  Since $\bs{\bar{\beta}}^x\in\cd$, there exists
  $\bs{\eta}\in\reed$ such that
  $\pi_J(\bs{\bar{\beta}}^x)=\bs{\eta}$.  As $T_{\reed}\leq K$, $g_{\bs{\eta}}\in K$, and so $y=\chi(x)g_{\bs{\eta}}\in\Aut(\NR)$, and
  we have, by
  (\ref{chi}), \begin{equation*}\bs{\beta}^y=\pi_J(\bs{\bar{\beta}})^{\chi(x)g_{\bs{\eta}}}=\pi_J(\bs{\bar{\beta}}^x)^{g_{\bs{\eta}}}=\bs{\eta}^{g_{\bs{\eta}}}  
  =\bs{\eta}+\bs{\eta}={\bf{0}}.\end{equation*}  Consequently,
  $\Aut(\NR)$ acts transitively on $\NR$.    
  
  Recall that $\NR$ has covering radius $\rho=4$ (Remark \ref{rem:comreg}). Let $\NR_i$ denote the set of vertices at distance $i$ from $\NR$ for $i=1,\ldots,4$. 
  Since $\Aut(\NR)_{{\bf{0}}}\cong 2^4:A_7$ is acting 
  $3$-transitively on entries and $\delta=6$, one deduces that $\Aut(\NR)_{\z}$ acts transitively on
  $\Gamma_{i}(\z)=\Gamma_{i}(\z)\cap \NR_i$ for $i=1,2,3$.  Hence, by \cite[Lemma
    2.2]{paphad}, $\Aut(\NR)$ acts transitively on $\NR_i$ for
  $i=1,2,3$.       

  Let $\bs{\nu}$ be an element of $\NR_4$.
  Then there exists $\bs{\alpha}\in\NR$ such that $d(\bs{\nu},\bs{\alpha})=4$. As $\Aut(\NR)$
  acts transitively on $\NR$, there exists $x\in\Aut(\NR)$ such that $\bs{\alpha}^x=\z$.
  In particular $d(\z,\bs{\nu}^x)=4$, and because $\Aut(\NR)$ preserves the distance partition of $\NR$,
  it follows that $\Gamma_{4}(\z)\cap\NR_4\neq\emptyset$.  Also, let
  $\bs{\beta}$ be any codeword of weight $6$, and let
  $\bs{\nu}^*\in\Gamma_{4}(\z)$ be such that
  $\supp(\bs{\nu}^*)\subseteq\supp(\bs{\beta})$.  Then 
  $d(\bs{\nu}^*,\bs{\beta})=2$, and so
  $\Gamma_{4}(\z)\cap\NR_2\neq\emptyset$.  Consequently, because
  $\Aut(\NR)_{\z}$ fixes setwise $\Gamma_{4}(\z)$ and preserves
  the distance partition of $\NR$, $\Aut(\NR)_{\z}$ has at least $2$
  orbits on $\Gamma_{4}(\z)$.  Moreover, we see in \cite[Table XI]{orbit}
  that $\Aut(\NR)_{\z}$ has exactly two orbits on
  $\Gamma_{4}(\z)$.  Thus $\Aut(\NR)_{\z}$ acts transitively on
  $\Gamma_{4}(\z)\cap\NR_4$, and so, by \cite[Lemma 2.2]{paphad}, 
  $\Aut(\NR)$ acts transitively on $\NR_4$.      
\end{proof}

\begin{corollary}\label{cor1} $K=T_\mathcal{R}$ and $\Aut(\NR)/K\cong 2^4:A_8$.
\end{corollary}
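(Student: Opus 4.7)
Let $\bar{H}\leq S_{16}$ denote the image of $H$ under its faithful action on $J$; then $\bar{H}\cong H\cong\AGL(4,2)\cong 2^4{:}A_8$. The plan is to show that $\mu(\Aut(\NR))=\bar{H}$, whence, since $K=\ker\mu$, the First Isomorphism Theorem will yield $\Aut(\NR)/K\cong\bar{H}\cong 2^4{:}A_8$.

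First I would compare orders. Theorem \ref{nrctr} provides transitivity of $\Aut(\NR)$ on $\NR$, so $|\Aut(\NR)|=256\cdot|\Aut(\NR)_{\z}|=256\cdot|2^4{:}A_7|=256\cdot 40320$. Combined with $|K|=|T_{\reed}|=2^5=32$ from Lemma \ref{kernel}, this yields $|\Aut(\NR)/K|=322560=|\bar{H}|$. Hence it will suffice to prove the inclusion $\bar{H}\subseteq\mu(\Aut(\NR))$, i.e.\ to exhibit, for each $\sigma\in H$, an element of $\Aut(\NR)$ whose image under $\mu$ restricts on $J$ to $\sigma|_J$.

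I would split the construction into two cases according to whether $\sigma$ fixes $8$. If $8^\sigma=8$, then $\bs{\bar{\alpha}}_i^\sigma$ has support meeting $J^*$ in $\{i^\sigma,8\}$ for every $i\in\{1,\ldots,7\}$, so $\sigma$ permutes $\{\cd^0,\ldots,\cd^7\}$ via $\cd^i\mapsto\cd^{i^\sigma}$ (fixing $\cd^0=\cd$) and stabilises $\B$ setwise; hence $\chi(\sigma)\in\Aut(\NR)$, and since $\sigma$ has trivial translation part we have $\mu(\chi(\sigma))=\sigma|_J$. If $8^\sigma\neq 8$, set $i=8^{\sigma^{-1}}\in\{1,\ldots,7\}$ and let $g$ be the translation of $\F_2^{24}$ by $\bs{\bar{\alpha}}_i$; the computations in the proof of Theorem \ref{nrctr} apply verbatim to show that $x=g\sigma$ preserves $\B$ setwise, so $\chi(x)\in\Aut(\NR)$. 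A short calculation gives $\bs{v}^{\chi(x)}=(\bs{v}+\pi_J(\bs{\bar{\alpha}}_i))^{\sigma|_J}$, which identifies $\chi(x)\in\base\rtimes\topg$ as the product of the translation $g_{\pi_J(\bs{\bar{\alpha}}_i)}\in\base$ and the element $\sigma|_J\in\topg$, so again $\mu(\chi(x))=\sigma|_J$. Combined with the order computation this forces $\mu(\Aut(\NR))=\bar{H}\cong 2^4{:}A_8$. The only substantive task is the two-case analysis above, but both subcases reduce to arguments already developed in the proof of Theorem \ref{nrctr}, so no essentially new ingredient is required.
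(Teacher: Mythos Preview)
Your argument is correct, and it takes a genuinely different route from the paper's. Both proofs begin with the same order computation: transitivity of $\Aut(\NR)$ on $\NR$ (Theorem~\ref{nrctr}) together with $|K|=2^5$ (Lemma~\ref{kernel}) forces $|\Aut(\NR)/K|=|2^4{:}A_8|$. From there the paper observes that $\mu(\Aut(\NR))$ is a $3$-transitive subgroup of $S_{16}$ containing $\mu(\Aut(\NR)_{\z})\cong 2^4{:}A_7$ and invokes the classification of finite $2$-transitive groups to list the possible overgroups ($2^4{:}A_7$, $2^4{:}A_8$, $A_{16}$, $S_{16}$), then eliminates all but $2^4{:}A_8$ by order. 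You instead prove the inclusion $\bar{H}\leq\mu(\Aut(\NR))$ directly, by lifting each $\sigma\in H$ to an element of $\Aut(\NR)$: when $\sigma$ fixes $8$ the pure permutation $\chi(\sigma)$ works, and when $8^\sigma\neq 8$ you set $i=8^{\sigma^{-1}}$ and recycle the computation from the proof of Theorem~\ref{nrctr} to see that $\chi(g_{\bs{\bar{\alpha}}_i}\sigma)\in\Aut(\NR)$ with $\mu$-image $\sigma|_J$. Your approach is more elementary, avoiding the classification of multiply transitive groups entirely, and has the pleasant by-product of identifying $\mu(\Aut(\NR))$ concretely as the faithful image $\bar{H}$ of $H\cong\AGL(4,2)$ acting on $J$; the paper's approach is shorter on the page but leans on a deep external result.
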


\begin{proof} Since $\Aut(\NR)$ acts transitively on $\NR$, and $\Aut(\NR)_{\z}\cong 2^4:A_7$, we have that $|\Aut(\NR)|=2^{12}|A_7|$. 
Moroever, $\Aut(\NR)/K$ is a $3$-transitive subgroup of $S_{16}$ containing $2^4:A_7$, and so, by the
classification of finite $2$-transitive groups of degree $16$ (see \cite{sim}, for example), $\Aut(\NR)/K\cong 2^4:A_7$, $2^4:A_8$, $A_{16}$ or
$S_{16}$. As $T_{\reed}\leq K$, the only possibility is that $K=T_{\reed}$ and $\Aut(\NR)/K\cong 2^4:A_8$.
\end{proof}

\subsection{The Punctured Nordstrom-Robinson Code $\PN$}

The punctured Nordstrom-Robinson code $\PN$ is a $(15,256,5)$ code (see, for example, \cite{preparata}). 
Moreover, since all $(15,256,5)$ codes are equivalent, we can assume without loss of generality
that $\PN$ is obtained from $\mathcal{N}$ by puncturing the first entry, as in \cite{berl}. 
Recall also (Remark \ref{rem:comreg}) that $\PN$ has covering radius $3$.
By \cite[Lemma 6.5]{berl}, $\Aut(\PN)_{\z}\cong A_7$ acting $2$-transitively on
$15$ points. The action of $\Aut(\PN)_{\z}\cong A_7$ on
$\Gamma_{3}(\z)$ is equivalent to its action on the $3$-element subsets
of $M=\{1,\ldots,15\}$. The permutation characters for actions of
$A_7$ on $M$, and
on the $3$-element subsets of $M$, have inner product equal to $2$, see 
\cite[p.10]{atlas}.  Hence $A_7$ has exactly two orbits on $3$-element
subsets of $M$, so $\Aut(\PN)_{\z}$ has exactly two orbits on $\Gamma_{3}(\z)$.   

\begin{theorem}\label{pnrctr} $\PN$ is completely transitive.  
\end{theorem}

\begin{proof} Let $M$ denote the set of entries of $\NR$ and $J=M\backslash\{1\}$. Recall the homomorphism $\chi$
  from (\ref{chi}) with kernel equal to $\ker\chi=\langle
  (g_1,\ldots,g_{16})\rangle$, where $g_1=(0\,1)$ and $g_i=1$
  for $i\neq 1$. Also note that $\Aut(\NR)_J$ is equal to $\Aut(\NR)_{\{1\}}$ because $J$ and
 $\{1\}$ are disjoint sets. Now, since $\Aut(\NR)\cap\base=K=T_{\reed}$ (Corollary
  \ref{cor1}), it follows that $\Aut(\NR)\cap\ker\chi=1$.  
  Hence $\chi(\Aut(\NR)_{\{1\}})\cong\Aut(\NR)_{\{1\}}$, and it is straightforward to
  show that $\chi(\Aut(\NR)_{\{1\}})\leq\Aut(\PN)$.  Also
  $K\leq\Aut(\NR)_{\{1\}}$, and $\Aut(\NR)/K\cong 2^4:A_8$, by Corollary
  \ref{cor1}.  Thus $\Aut(\NR)_{\{1\}}/K\cong A_8$.  As $\Aut(\PN)_{\z}\cong A_7$, it follows
  from the orbit stabiliser theorem that $$|\Aut(\PN)|\leq  
  |\PN||\Aut(\PN)_{\z}|=|K||A_8|=|\Aut(\NR)_{\{1\}}|.$$ Hence, we deduce
  that $\Aut(\NR)_{\{1\}}\cong\Aut(\PN)$ and $\Aut(\PN)$ acts transitively
  on $\PN$.            

  As stated above, $\Aut(\PN)_{\z}$ has exactly two orbits on $\Gamma_3(\z)$. Thus,  by following a similar argument to the one used the proof of Theorem \ref{nrctr}, 
  recalling that $\PN$ has coving radius $3$ and minimum distance $5$, one deduces that $\Aut(\PN)$ acts transitively on $\PN_i$ for $i=1,2,3$.  
\end{proof}

Recall from Section \ref{sec:comreg} that $\mathcal{N}$ is the first member the Preparata codes \cite{preparata}. 
Theorem \ref{main2} follows from the next result.

\begin{proposition}\label{p:thmproof} The Preparata code $\mathcal{P}(k)$ and the punctured Preparata Code $\mathcal{P}(k)^*$ are completely transitive if and only if $k=3$. In particular, for $k>3$ odd, $\mathcal{P}(k)$ and $\mathcal{P}(k)^*$ are completely regular but not completely transitive.
\end{proposition}

\begin{proof} First, let us consider $\Perm(\mathcal{P}(k))$ and 
$\Perm(\mathcal{P}(k)^*)$,  the group of permutation matrices that fix the respective code setwise. In \cite{kantor}, Kantor showed that, for odd $k>3$, 
$\Perm(\mathcal{P}(k))$ acts imprimitively on entries and $\Perm(\mathcal{P}(k)^*)$ has order $(2^k-1)k$.  However, it is known that for any binary completely transitive 
code $C$ of length $m$ with minimum distance at least $5$, the group $\Perm(C)$ acts $2$-homogeneously on entries \cite[Prop. 2.5]{entryfaith}. Therefore $\Perm(C)$ 
acts primitively on entries and $\binom{m}{2}$ divides $|\Perm(C)|$. By combining this result with Kantor's results, and recalling that $m=2^{k+1}$ or $2^{k+1}-1$, 
we deduce that $\mathcal{P}(k)$ and $\mathcal{P}(k)^*$ are not completely 
transitive for $k>3$. The backwards implication of the statement is a consequence of Theorem \ref{mainthm}. The complete regularity of $\mathcal{P}(k)$ and $\mathcal{P}(k)^*$ for all odd $k\geq 3$ is well known (see Section \ref{sec:comreg}), which proves the final statement.
\end{proof}

\section{Proof of Theorem \ref{mainthm}}\label{sec:proofmain1}

  Let $\Gamma=H(m,2)$ and $C$ be a completely regular code in $\Gamma$ with minimum
  distance $\delta$ for $(m,\delta)=(16,6)$ or $(15,5)$.  Complete
  regularity and minimum distance are preserved by equivalence,
  therefore, by replacing $C$ with an equivalent code if necessary, 
  we can assume that $\z\in C$.   Since $C$ contains $\z$ and is
  completely regular, it follows that $C(\delta)\neq\emptyset$, where
  $C(\delta)$ is the set of codewords of weight $\delta$.  Hence, by
  Theorem \ref{des1}, $C(\delta)$ forms a $t$-$(m,\delta,\lambda)$
  design for $t=\lfloor\frac{\delta}{2}\rfloor$ 
  and some positive integer $\lambda$.  Using \eqref{arith1} with $j=1$
  in the case $(16,6)$ and \eqref{arith2} with $j=2$ in the case $(15,5)$, we deduce that
  $2$ divides $\lambda$. Let $S$ be the set 
  of $\bs{\alpha}\in C(\delta)$ such that
  $\{1,\ldots,t\}\subset\supp(\bs{\alpha})$.  It follows that
  $|S|=\lambda$, and as $C$ has minimum distance $\delta$, we deduce
  that $\supp(\bs{\alpha})\cap\supp(\bs{\beta})=\{1,\ldots,t\}$ for all distinct
  pairs of codewords $\bs{\alpha},\bs{\beta}\in S$.  Consequently, a simple
  counting argument gives that $$\lambda\leq\frac{m-t}{\delta-t}.$$
  In both cases we deduce that $\lambda<5$, so $\lambda=2$ or $4$.
  However, by Line 21 of \cite[Table 3.37]{handbk} and Line 16 of
  \cite[Table 1.28]{handbk}, it follows that a $t$-$(m,\delta,\lambda)$ 
  design does not exist in both cases for $\lambda=2$.  Thus $\lambda=4$.  

  {\underline{\bf{Case $(m,\delta)=(16,6)$:}}}  In this case $C(6)$ forms a $3$-$(16,6,4)$ design, which is therefore also a $j-(16,6,\lambda_j)$ 
  design for $j\leq 3$, and in particular, $\lambda=\lambda_3=4$, $\lambda_2=14$,
  $\lambda_1=42$ and $\lambda_0=112$.  Let $\bs{\beta}\in C(6)$ and define $n_i=|\{\bs{\gamma}\in C(6)\,:\,|\supp(\bs{\gamma})\cap\supp(\bs{\beta})|=i\}|$.    
  Because $C(6)$ is necessarily a simple design, it follows that $n_6=1$, 
  and since $\delta=6$ we deduce that $n_4=n_5=0$. Then, by applying \cite[Thm. 5]{semzin69balcodes}, we deduce
  that $n_3=60$, $n_2=15$, $n_1=36$ and $n_0=0$.   
  Because $n_1\neq 0$, it follows that $\Gamma_{10}(\bs{\beta})\cap C\neq\emptyset$, 
  and therefore, because $C$ is completely regular, $C(10)\neq\emptyset$.  \emph{We now
  claim that $\1\in C$, and consequently, that $C$ is antipodal.}  Suppose to the contrary.  
  Then $\1+C_{\rho}=C$ and $\rho\geq\delta-1=5$, where $\rho$ is the covering 
  radius of $C$ \cite[Thm. 11]{nonantipodal}.  Moreover, because $C(10)\neq\emptyset$, it holds that $\rho\leq 6$, and 
  because $\delta=6$, it follows from Lemma \ref{lem:rcr} that $\1+C_{\rho-i}=C_i$ for $i=1,2$.  
  We now calculate the size of the set $C_3$ in the distance partition of $C$.  To do this, 
  we count the pairs $\{(\bs{\nu},\bs{\gamma})\in C_3\times C\,:\,d(\bs{\nu},\bs{\gamma})=3\}$.  Counting this set in two 
  ways gives $$|C_3||\Gamma_3(\bs{\nu})\cap C|=|C|\binom{16}{3},$$ where $\bs{\nu}$ is any vertex in $C_3$.
  Fix $\bs{\nu}\in\Gamma_3(\z)$.  It follows that that if $\bs{\gamma}\in\Gamma_3(\bs{\nu})\cap C$ then either $\bs{\gamma}=\z$ or
  $\bs{\gamma}\in C(6)$, and if the later holds then $\bs{\gamma}$ covers $\bs{\nu}$.  Therefore, because $C(6)$ forms a $3$-$(16,6,4)$
  design, we deduce that $|\Gamma_3(\bs{\nu})\cap C|=5$, and so $|C_3|=|C|\times 112$.  Now suppose that $\rho=5$.  Then, by Lemma \ref{lem:rcr}, 
  $|C_2|=|C_3|$.  However, $|C_2|=|C|\binom{16}{2}$ which is a contradiction.  Thus $\rho=6$.  However, then Lemma \ref{lem:rcr} implies that
  $|C|(2+2\times 16+2\times\binom{16}{2}+112)=2^{16}$, which is a contradiction.  Hence $\1\in C$ and $C$ is antipodal.    
  Therefore, if $a(C)=(a_0,\ldots,a_m)$, we deduce that $a_i=a_{m-i}$ for all $i$.   
  As $|C(6)|=112$, it follows that $$a(C)=(1,0,0,0,0,0,112,a_7,a_8,a_7,112,0,0,0,0,0,1).$$ 
  We conclude from (\ref{kracheqn}) and \cite[Lemma 5.3.3]{vanlint}
  that the following constraints must
  hold: \begin{equation*}240-12a_7-8a_8 \geq 0;\,\,\,
-840-28a_7+28a_8\geq 0,\end{equation*}
   with $a_7\geq 0$ and $a_8\geq
  0$.  Solving these constraints gives that $a_7=0$ and $a_8=30$. 
    Consequently $C$ is a $(16,256,6)$ binary code, and so, by
    Snover's result \cite{snover}, $C$ is equivalent to the
    Nordstrom-Robinson code, proving the first part of Theorem \ref{mainthm}.  

  {\underline{\bf{Case $(m,\delta)=(15,5)$:}}}  Here $C(5)$ forms a
  $j-(15,5,\lambda_j)$ design for $j\leq 2$ with $\lambda=\lambda_2=4$, $\lambda_1=14$,
  and $\lambda_0=42$. As above let $\bs{\beta}\in C(5)$ and define $n_i=|\{\bs{\gamma}\in C(5)\,:\,|\supp(\bs{\gamma})\cap\supp(\bs{\beta})|=i\}|$.    
  Since $C(5)$ is a simple design with minimum distance $\delta=5$, we deduce that $n_5=1$, $n_4=n_3=0$.
  By applying \cite[Thm. 5]{semzin69balcodes}, we calculate that $n_2=30$, $n_1=5$ and $n_0=6$. 
  Since $n_0=6$, $a_{10}\neq 0$ in the distance distribution of $C$.
  Now, by following a similar argument to the one we used in the previous case, we deduce that $C$ is in fact antipodal, and
  so  $$a(C)=(1,0,0,0,0,42,a_6,a_7,a_7,a_6,42,0,0,0,0,1)$$Again, using the MacWilliams transform, we deduce that the following inequalities must hold:
  \begin{equation*} 630-6a_6-14a_7\geq 0;\,\,\,-210-6a_6+42a_7\geq 0;\,\,\,-390+6a_6-2a_7\geq 0,
\end{equation*}
  with $a_6\geq 0$, $a_7\geq 0.$ These solve to give $a_6=70$ and $a_7=15$, and so $C$ is a $(15,256,5)$ 
  binary code.  Thus, by Snover's result \cite{snover}, $C$ is
  equivalent to the punctured Nordstrom-Robinson code, proving the second part of Theorem
  \ref{mainthm}.
  
  By \cite[Lemma 2]{fpa}, complete transitivity is preserved by
  equivalence, and by Theorem \ref{nrctr} and Theorem \ref{pnrctr},
  the Nordstrom-Robinson codes are completely transitive.
  Consequently, in both cases, $C$ is completely transitive, proving the final statement of Theorem \ref{mainthm}.

\section{Nordstrom-Robinson Code as a $\Z_4$-linear code}\label{s:z4linear}
The Nordstrom-Robinson code is also the first member of another infinite family of non-linear binary codes, the \emph{Kerdock codes} \cite{kerdock}.
For each odd $k\geq 3$, the Kerdock code $\mathcal{K}(k)$ is a code of length $2^{k+1}$, with $\mathcal{K}(3)$ equal to $\mathcal{N}$. The codes $\mathcal{K}(k)$ and $\mathcal{P}(k)$ are \emph{formally dual}, by which we mean the distance distribution of one can
be obtained by taking the MacWilliams transform of the distance distribution of the other. In particular, the Nordstrom-Robinson code is \emph{formally self dual}. 
However, as these codes are non-linear, neither is the dual code of the other. It was not until work by Hammons et al.~\cite{hammons}
on linear codes over $\Z_4$ that an explanation for this phenomenon was discovered. 

To describe Hammons et al.~work, we first define the \emph{Lee metric}. We define 
$d_L(a,b)$ for $a,b\in\Z_4$ as follows: $d_L(a,b)=2$ if and only if $\{a,b\}=\{0,2\}$ or $\{1,3\}$, 
otherwise $d_L(a,b)=1$. We extend this definition to $m$-tuples of $\Z_4$, that is, the \emph{Lee distance} between 
${\bs{\alpha}},{\bs{\beta}}\in\Z_4^m$ is $$d_L(\bs{\alpha},\bs{\beta})=\sum_{i=1}^m d_L(\alpha_i,\beta_i).$$
We define the \emph{Grey map} to be the bijection $f:\Z_4\longmapsto \F_2^2$ given by
\begin{equation}\label{greymap}f(0)=00,\,\,f(1)=01,\,\,f(2)=11,\,\,f(3)=10,\end{equation}
and we extend this map to a bijection from $\Z_4^m$ to $\F_2^{2m}$ by 
$$\phi((\alpha_1,\ldots,\alpha_m))=(f(\alpha_1),\ldots,f(\alpha_m)).$$
The map $\phi$ is an isometry from $\Z_4^m$, with the Lee metric, to $\F_2^{2m}$, with the Hamming metric \cite[Thm.~1]{hammons}.

A \emph{linear code $C$ over $\Z_4$} of length $m$ is an additive subgroup of $\Z_4^m$.
An inner product on $\Z_4^m$ is defined to be $\bs{\alpha}\cdot\bs{\beta}=\alpha_1\beta_1+\ldots+\alpha_m\beta_m \mod 4$ from which
the usual notion of a \emph{dual code} $C^\perp$ can be defined.  
Hammons et al.~proved that the Kerdock codes and the Preparata codes of length $2^{k+1}$
are the image under $\phi$ of certain linear codes $\mathcal{C_K}$ and $\mathcal{C_P}$ in $\Z_4^m$, where $m=2^k$.
Moreover, these codes are dual codes of each other in $\Z_4^m$, that is $\mathcal{C_K}^{\perp}=\mathcal{C_P}$, explaining why the distance
distributions are related as they are.

Let $\Gamma$ be the graph with $V(\Gamma)=\Z_4^m$ and adjacency given by the Lee metric, that is,
$\bs{\alpha},\bs{\beta}\in V(\Gamma)$ are adjacent if and only if $d_L(\bs{\alpha},\bs{\beta})=1$. 
Since $\phi$ is a bijective isometry from $\Gamma$ to $H(2m,2)$, it follows that $\phi$ is a graph isomorphism. 
Therefore, $\Gamma$ and $H(2m,2)$ have isomorphic automorphism groups, namely $\Aut(\Gamma)\cong S_2\wr S_{2m}$.
Moreover, a code $C$ is completely transitive in $\Gamma$ if and only if it is completely transitive in $H(2m,2)$. Thus, we have the following.

\begin{proposition} The Octacode, the $\Z_4$-representation of the Nordstrom Robinson code, is completely transitive.
\end{proposition}

It is natural to ask if one can prove that a code in $\Gamma$ is completely transitive without appealing to its binary representation. 
Our interpretation of this question is that the symmetries involved in the proof should preserve the module structure of $\Z_4^m$. 
The largest subgroup of $\Aut(\Gamma)$ which preserves this structure is determined in the following lemma.

\begin{lemma}\label{l:z4autgrp} Let $\Gamma$ be defined as above.  Then the subgroup $G$ of $\Aut(\Gamma)$ that preserves the 
$\Z_4^m$ structure in $\Gamma$ is isomorphic to $D_8\wr S_m$.  
\end{lemma}

\begin{proof} Any automorphism of $\Gamma$ that preserves $\Z_4^m$ structure must preserve the partition 
$$\{\{1,2\},\{3,4\},\ldots,\{2m-1,2m\}\}$$
in its action on the vertex entries of $H(2m,2)$.
The largest subgroup of $\Aut(H(2m,2))$ that preserves this partition is $S_2\wr(S_2\wr S_m)$.  
Writing this as a subgroup of the wreath product acting on $\Z_4^m$, this is equal to $(S_2\wr S_2)\wr S_m$.  Now
$S_2\wr S_2=D_8$. Therefore the group $G$ of automorphisms of $\Gamma$ that preserve the $\Z_4^m$ structure is a subgroup
of $D_8\wr S_m$. 

Now let $H$ be the group generated by the permutations $(0,1,2,3)$ and $(0,2)$ of $\Z_4$, so $H\cong D_8$.
The group $H\wr S_m=H^m\rtimes S_m$ acts on the vertices of $\Z_4^m$ in its product action (similar to the action
of $S_2\wr S_{2m}$ on the vertices of the Hamming graph $H(2m,2)$ given in \eqref{eq:hamact}). It is clear that
$S_m$ preserves adjacency in $\Gamma$.  Moreover, by placing the elements of $\Z_4$ on 
the corners of a square, one deduces that $H$ preserves the Lee metric on $\Z_4$, and so $H^m$ preserves adjacency
in $\Gamma$.  Thus $H\wr S_m\leq G$.
\end{proof}

Our view of symmetry of a $\Z_4$-code $C$ allows all symmetries of $C$ in $\Aut(\Gamma)=S_2\wr S_{2m}$. 
Namely we consider the full symmetry group to be the setwise stabiliser of $C$ in $S_2 \wr S_{2m}$. Since $D_8 \wr S_m < S_2 \wr S_{2m}$, 
this group may be the same as the stabiliser of $C$ in $D_8\wr S_m$, or it may be larger.  If it is larger then there is the potential for the larger 
group to act completely transitively while the group preserving the $\Z_4$-structure does not.  Indeed this is the case for the 
Nordstrom-Robinson code and its $\Z_4$-representation the \emph{Octacode}.  That is to say, it is straightforward to show that the
stabiliser of the Octacode in $D_8\wr S_m$ is properly contained in its stabiliser in $\Aut(\Gamma)$ and does not act completely transitively on the code. 
Therefore, to prove the complete transitivity of the Nordstrom-Robinson code (and thus the Octacode), one should consider its binary representation.

\begin{acknowledgement} The authors would like to thank the anonymous referees for their helpful comments which improved the presentation of the paper.
\end{acknowledgement}


\def\cprime{$'$} \def\cprime{$'$}

\affiliationone{
   Neil I. Gillespie\\
   Heilbronn Institute for\\ Mathematical Research,\\ 
   School of Mathematics, Howard House,\\
   The University of Bristol,\\ Bristol, BS8 1SN,\\
   United Kingdom.
   \email{neil.gillespie@bristol.ac.uk}}```
\affiliationtwo{
   Cheryl E. Praeger\\
   Centre for the Mathematics of\\ Symmetry and Computation,\\
   School of Mathematics and Statistics,\\
The University of Western Australia,\\
35 Stirling Highway, Crawley,\\ Western Australia, 6009.\\
Also affiliated with\\ King Abdulaziz University, Jeddah,\\ Saudi Arabia.
   \email{cheryl.praeger@uwa.edu.au}}

\end{document}